\documentclass[preprint,12pt]{elsarticle}

\usepackage{hyperref}
\usepackage{xcolor}
\usepackage{amssymb}
\usepackage{amsmath}
\usepackage{amsthm}

\usepackage[english]{babel}
\usepackage{graphicx,epstopdf,epsfig}
\usepackage{amsfonts,epsfig,fancyhdr,graphics}
\newtheorem{theorem}{Theorem}[section]

\newtheorem{corollary}[theorem]{Corollary}
\newtheorem{proposition}[theorem]{Proposition}
\newtheorem{definition}[theorem]{Definition}

\usepackage{xcolor}

\journal{Linear Algebra and its Applications}

\begin{document}

\begin{frontmatter}

%% Title, authors and addresses

%% use the tnoteref command within \title for footnotes;
%% use the tnotetext command for theassociated footnote;
%% use the fnref command within \author or \affiliation for footnotes;
%% use the fntext command for theassociated footnote;
%% use the corref command within \author for corresponding author footnotes;
%% use the cortext command for theassociated footnote;
%% use the ead command for the email address,
%% and the form \ead[url] for the home page:
%% \title{Title\tnoteref{label1}}
%% \tnotetext[label1]{}
%% \author{Name\corref{cor1}\fnref{label2}}
%% \ead{email address}
%% \ead[url]{home page}
%% \fntext[label2]{}
%% \cortext[cor1]{}
%% \affiliation{organization={},
%%             addressline={},
%%             city={},
%%             postcode={},
%%             state={},
%%             country={}}
%% \fntext[label3]{}

\title{Schur-Horn theorem and Ky Fan's minimum principle for symplectic eigenvalues}

%% use optional labels to link authors explicitly to addresses:
%% \author[label1,label2]{}
%% \affiliation[label1]{organization={},
%%             addressline={},
%%             city={},
%%             postcode={},
%%             state={},
%%             country={}}
%%
%% \affiliation[label2]{organization={},
%%             addressline={},
%%             city={},
%%             postcode={},
%%             state={},
%%             country={}}

\author{Kennett L. Dela Rosa, Aedan Jarrod A. Potot} %% Author name

%% Author affiliation
\affiliation{organization={Institute of Mathematics, University of the Philippines Diliman},%Department and Organization
            addressline={C.P. Garcia Ave., UP Campus}, 
            city={Quezon City},
            postcode={1101}, 
            country={Philippines}}

%% Abstract
\begin{abstract}
%% Text of abstract
%The Ky Fan principle and the Schur-Horn theorem are known to have analogues for symplectic eigenvalues. In this study, 
The symplectic analogues of Schur's theorem and Ky Fan's minimum principle are shown to be equivalent. Moreover, the symplectic Schur's and Horn's theorems are extended to generalized means.
\end{abstract}

%%Graphical abstract
%%\begin{graphicalabstract}
%%\includegraphics{grabs}
%%\end{graphicalabstract}

%%Research highlights
%%\begin{highlights}
%%\item Research highlight 1
%%\item Research highlight 2
%%\end{highlights}

%% Keywords
\begin{keyword}
  positive definite matrix \sep symplectic eigenvalue \sep Schur-Horn theorem \sep Ky Fan minimum/maximum principle
%% keywords here, in the form: keyword \sep keyword

%% PACS codes here, in the form: \PACS code \sep code

%% MSC codes here, in the form: \MSC code \sep code
%% or \MSC[2008] code \sep code (2000 is the default)
\MSC[2020] 15A42 \sep 15A99 \sep 15B57 \sep 15B99 
\end{keyword}

\end{frontmatter}

%% Add \usepackage{lineno} before \begin{document} and uncomment 
%% following line to enable line numbers
%% \linenumbers

%% main text
%%

\section{Introduction}

Given $z_1,z_2,\ldots,z_n\in \mathbb{R}$, write $z_1^\uparrow,z_2^\uparrow,\ldots, z_n^\uparrow$ to mean that the $z_j$'s are arranged as $z_1^\uparrow\leq z_2^\uparrow\leq \cdots\leq z_n^\uparrow$. Let $x=[x_j]_{j=1}^n$ and $y=[y_j] _{j=1}^n$ be real vectors. Then $x$ is \textit{weakly supermajorized} by $y$, denoted $x\prec^w y$, if
\begin{equation}
		\sum_{j=1}^k x_j^\uparrow \geq \sum_{j=1}^k y_j^\uparrow,\ \textup{for all}\ k=1,\ldots,n.\label{1introductioneq1}
\end{equation}
If \eqref{1introductioneq1} holds and $\sum_{j=1}^n x_j^\uparrow = \sum_{j=1}^n y_j^\uparrow$, then $x$ is \textit{majorized} by $y$, denoted $x\prec y$.

The Schur-Horn theorem reveals that majorization is the precise relationship between the diagonal entries and eigenvalues of a Hermitian matrix. Given Hermitian $A=[a_{ij}]_{i,j=1}^n$ with eigenvalues $\lambda_1, \lambda_2, \ldots, \lambda_n$, let $\textup{diag}(A)=[a_{jj}]_{j=1}^n$ and $\lambda(A)=[\lambda_j]_{j=1}^n$. Schur's theorem asserts that $\textup{diag}(A)\prec \lambda(A)$ \cite[Theorem 4.3.45]{matan1}. Conversely, Horn's theorem guarantees that for real vectors $x,y$ where $x\prec y$, there exists real symmetric matrix whose main diagonal entries are the entries of $x$ and whose eigenvalues are the entries of $y$ \cite[Theorem 4.3.48]{matan1}. The Ky Fan's minimum/maximum principle ensures that the minimum (resp. maximum) value of $\mbox{tr}(X^*AX)$ over all $n$-by-$k$ matrices $X$ with $X^*X=I_k$ is given by $\sum^{k}_{j=1}\lambda_{j}^\uparrow$ (resp., $\sum^{n}_{j=n-k+1}\lambda_{j}^\uparrow$) \cite[Corollary 4.3.39]{matan1}.
 %\begin{equation}\label{kyfanprinciple}
%\[\begin{array}{rcl}
%\displaystyle\min_{X \in \mathcal{U}_{k}}\mathrm{tr}(X^{*}AX)&=&\displaystyle\sum^{k}_{j=1}\lambda_{j}\\
%\displaystyle\max_{X \in \mathcal{U}_{k}}\mathrm{tr}(X^{*}AX)&=&\displaystyle\sum^{n}_{j=n-k+1}\lambda_{j}
%\end{array}\]
% \end{equation}
 %where $\mathcal{U}_{k}=\left\{  X \in M_{n,k}\left( \mathbb{C} \right) \mid X^{*}X=I_{k}  \right\}$.
Schur's theorem is known to be equivalent to Ky Fan's minimum/maximum principle \cite[Exercises II.1.12-13]{matan2}. 

Let $A$ be $2n$-by-$2n$ real positive definite. Williamson's theorem implies that $A=W (D \oplus D)W^\top$ where $W$ is symplectic and $D$ is diagonal \cite{sympeigenvalues,williamsons}. The diagonal entries of $D$, say $\delta_1,\ldots,\delta_n$, are known as the \textit{symplectic eigenvalues} of $A$. There has been great interest in exploring symplectic eigenvalue analogues of classical results on Hermitian matrices due to its applications in mechanics \cite{classicalmech,sympgeomandquantmech,sympgroups,symptopology} and quantum information \cite{gaussianquant, gaussianchannel, quantummarginal, gaussianstates}. Theoretical questions have been considered such as the precise conditions for majorization in the symplectic Schur's theorem  \cite{sympeigenvalues2}, the symplectic Weyl's inequalities \cite{sympeigenvalues3}, and sums/products of symplectic eigenvalues \cite{sumsympeigenvalues}. The symplectic Schur's theorem asserts that the vector of symplectic eigenvalues of $A$ supermajorizes its vector of symplectic diagonals associated with the geometric and arithmetic means \cite[Theorems 1 and 3]{sympschurhorn}. For symplectic diagonals using the geometric and arithmetic means, the converse holds and is known as the symplectic Horn's theorem \cite[Theorems 1 and 3]{sympschurhorn}. On the other hand, the symplectic Ky Fan's minimum principle implies that the minimum value of $\mathrm{tr}(X^\top AX)$ over all $2n$-by-$2k$ matrices $X$ with $X^{\top} J_{2n} X=J_{2k}$ is equal to $2\sum^{k}_{j=1}\delta_{j}^\uparrow$ \cite[Theorem 5]{sympeigenvalues}.
%\[\displaystyle\min_{X \in \mathcal{S}_{k}}\mathrm{tr}(X^\top AX)=2\displaystyle\sum^{k}_{j=1}d_{j}\]
%where $\mathcal{S}_{k}=\left\{  X \in M_{2n,2k}(\mathbb{R}): X^{\top}J_{2n}X=J_{2k}  \right\}$.

%Let $d_{s}(A)$ denote the vector of symplectic eigenvalues of $A$. In \cite{sympschurhorn}, Bhatia and Jain considered two \textit{symplectic diagonals}, namely $\Delta_s(A)=[(\Delta(A_{11})_i \Delta(A_{22})_i)^{1/2}]$ and $\Delta_c(A)=\left[\dfrac{\Delta(A_{11})_{i}+\Delta(A_{22})_{i}}{2}\right]$. Given these symplectic diagonals, they proved the Schur-Horn theorem for symplectic eigenvalues. That is, if $A \in M_{2n}\left( \mathbb{R} \right)$ is positive definite, then $\Delta_{s}(A)\prec ^{w}d_{s}(A)$ and $\Delta_{c}(A)\prec ^{w}d_{s}(A)$. Conversely, if $x,y \in \mathbb{R}^{n}$ such that $x\prec^{w} y$, then there exists positive definite $A \in M_{2n}(\mathbb{R})$ such that $x=\Delta_{s}(A)$ and $y=d_{s}(A)$ or $x=\Delta_c(A)$ and $y=d_{s}(A)$. In \cite{sympeigenvalues}, they proved the Ky Fan principle for symplectic eigenvalues. Given positive definite $A \in M_{2n}(\mathbb{R})$, the symplectic Ky Fan principle states that for all $k=1,\dots,n$,
%$$
%\min_{X \in \mathcal{S}_{k}}\mathrm{tr}\left(X^{\top}AX\right)=2\sum^{k}_{j=1}d_s(A)_{j}^{\uparrow}
%$$
%where $\mathcal{S}_{k}=\left\{  X \in M_{2n,2k}(\mathbb{R}): X^{\top}J_{2n}X=J_{2k}  \right\}$.

The goal of this paper is to extend the main results in \cite{sympschurhorn} for generalized means. In particular, we prove that the symplectic Horn's theorem holds \ref{theorem6.3}) for any mean while the symplectic Schur's theorem holds under a mild condition on the mean (Theorem \ref{theorem6.2}). As a consequence, we provide a unified proof of Theorems 1 and 3 in \cite{sympschurhorn}. Moreover, we show that the symplectic Schur's theorem is equivalent to the symplectic Ky Fan minimum principle (Theorem \ref{equiv}). 

\section{Preliminaries}

Let $M_{m,n}$ be the set of all $m$-by-$n$ real matrices. Set $M_n:=M_{n,n}$ and $\mathbb{R}^n:=M_{n,1}$. Denote by $I_n$ the $n$-by-$n$ identity matrix and $0_n$ the $n$-by-$n$ zero matrix. Let $\mathrm{diag}(d_{1},d_2,\dots,d_{n})$ be the $n$-by-$n$ diagonal matrix with diagonal entries $d_{1},d_2,\dots,d_{n}$. Given $A=[a_{ij}]_{i,j=1}^n$, let $\textup{diag}(A)=[a_{jj}]_{j=1}^n$ and $\sigma(A)$ be the set of all eigenvalues of $A$. If $A_j \in M_{m_j,n_j}$ for $j=1,\ldots,\ell $, denote their direct sum by $\oplus^\ell _{j=1} A_{j}$.

\begin{definition}
 Let $A_{j}=\begin{bmatrix}P_{j} & Q_{j} \\ R_{j} & S_{j}\end{bmatrix}$ where $P_j,S_j\in M_{m_j}$ for $j=1,\ldots,\ell $. The \textit{expanding sum of $A_{1},\dots,A_{s}$} is defined as
\[\boxplus^s_{j=1} A_{j}=\begin{bmatrix}
\oplus^\ell _{j=1} P_{j} & \oplus^\ell _{j=1} Q_{j} \\
\oplus^\ell _{j=1} R_{j} & \oplus^\ell _{j=1} S_{j}
\end{bmatrix}.\]
\end{definition}

The expanding sum is also known as the $s$\textit{-direct sum} (see \cite{sympeigenvalues}). Note that $\boxplus^\ell _{j=1} A_{j}$ is similar to $\oplus^\ell _{j=1} A_{j}$, and hence their eigenvalues are the same. Moreover, $\left(\boxplus^\ell _{j=1} A_{j}\right)\left(\boxplus^\ell _{j=1} B_{j}\right)=\boxplus^\ell _{j=1} A_{j}B_{j}$.

\begin{definition}
    Let $m_{1},\dots,m_{k}\in \mathbb{N}$ such that $ \sum^{k}_{j=1}m_{j}=n$. Let $A=\begin{bmatrix}[P_{ij}]_{i,j=1}^k & [Q_{ij}]_{i,j=1}^k \\ [R_{ij}]_{i,j=1}^k& [S_{ij}]_{i,j=1}^k\end{bmatrix}$ where $P_{jj},Q_{jj},R_{jj},S_{jj} \in M_{m_{j}}$ where $j=1,\ldots,k$. The \textit{$s$-pinching} of $A$ relative to $\left( m_{1},\dots,m_{k} \right)$ is
$
\mathcal{C}^{s}(A)=\boxplus ^{k}_{j=1}\begin{bmatrix}
P_{jj} & Q_{jj} \\
R_{jj} & S_{jj}
\end{bmatrix}.
$
\end{definition}

Note that if $A\in M_{2n}$ is positive definite, then so is $\mathcal{C}^s(A)$. Define $J_{2n}=\begin{bmatrix}
    0_n & I_n \\ - I_n & 0_n
\end{bmatrix}$. Observe that $J_{2m+2n}=J_{2m}\boxplus J_{2n}$, and so $J_{2n}=\boxplus_{j=1}^nJ_2$.

\begin{definition}
A matrix $W\in M_{2n}$ is said to be \textit{symplectic} if $W^\top J_{2n} W=J_{2n}$.
\end{definition}

%Observe that $W\in M_{2n}$ is symplectic if and only if $W$

\begin{proposition}\label{kd_prop2.4}
Let $U\in M_{2m}$ and $V,W\in M_{2n}$ be symplectic. Then the following are also symplectic: $W^\top$, $W^{-1}$, $VW$, and $U\boxplus V$.
\end{proposition}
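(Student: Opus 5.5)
We verify each claim directly from the defining identity $W^\top J_{2n}W=J_{2n}$, using only $J_{2n}^\top=-J_{2n}$, $J_{2n}^2=-I_{2n}$, $J_{2n}^{-1}=-J_{2n}$, and the expanding-sum facts already recorded: $J_{2m+2n}=J_{2m}\boxplus J_{2n}$ and $(\boxplus_j A_j)(\boxplus_j B_j)=\boxplus_j A_jB_j$. The plan is to do the product and the inverse first, then the transpose, and finally the expanding sum.

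\textbf{Product and inverse.} For the product,
\[(VW)^\top J_{2n}(VW)=W^\top(V^\top J_{2n}V)W=W^\top J_{2n}W=J_{2n}.\]
Taking determinants in $W^\top J_{2n}W=J_{2n}$ gives $\det(W)^2=1$, so $W$ is invertible. Multiplying the identity on the left by $(W^{-1})^\top=(W^\top)^{-1}$ and on the right by $W^{-1}$ yields $J_{2n}=(W^{-1})^\top J_{2n}W^{-1}$, so $W^{-1}$ is symplectic.

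\textbf{Transpose.} This is the only step needing a small trick. From $W^\top J_{2n}W=J_{2n}$ we get $J_{2n}W=(W^\top)^{-1}J_{2n}$. Inverting both sides gives
\[W^{-1}J_{2n}^{-1}=J_{2n}^{-1}W^\top .\]
Since $J_{2n}^{-1}=-J_{2n}$, this becomes $W^{-1}J_{2n}=J_{2n}W^\top$. Multiplying on the left by $W$ gives $J_{2n}=WJ_{2n}W^\top=(W^\top)^\top J_{2n}W^\top$, so $W^\top$ is symplectic. Equivalently, one may apply the inverse case to $W^{-1}$ and rewrite.

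\textbf{Expanding sum.} Transposition respects the expanding sum: $(U\boxplus V)^\top=U^\top\boxplus V^\top$. This holds because transposing a $2\times2$ block matrix whose blocks are direct sums transposes each diagonal block and swaps the off-diagonal ones, which matches transposing each $2\times 2$ block of $U$ and of $V$ individually. Using the product rule and $J_{2m+2n}=J_{2m}\boxplus J_{2n}$,
\[(U\boxplus V)^\top J_{2m+2n}(U\boxplus V)=(U^\top J_{2m}U)\boxplus(V^\top J_{2n}V)=J_{2m}\boxplus J_{2n}=J_{2m+2n}.\]

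The main obstacle, mild as it is, is the transpose case; checking the compatibility of $\boxplus$ with transposition is routine bookkeeping.
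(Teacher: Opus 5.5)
Your proof is correct and follows essentially the same route as the paper. You verify $VW$ and $U\boxplus V$ directly, using $J_{2m+2n}=J_{2m}\boxplus J_{2n}$ and the multiplicativity of $\boxplus$; you get $W^{-1}$ by multiplying the defining identity by $(W^\top)^{-1}$ and $W^{-1}$; and you get $W^\top$ by inverting the identity and using $J_{2n}^{-1}=-J_{2n}$ to reach $WJ_{2n}W^\top=J_{2n}$. Your determinant argument for invertibility and your check that $(U\boxplus V)^\top=U^\top\boxplus V^\top$ only make explicit two points the paper leaves implicit.
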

\begin{proof}
Since $W^\top J_{2n}W=J_{2n}$, $W$ is nonsingular and so \[J_{2n}=(W^\top)^{-1}J_{2n}W^{-1}=(W^{-1})^{\top}J_{2n}W^{-1}.\] Moreover, 
\[-J_{2n}=J_{2n}^{-1}=[(W^\top)^{-1}J_{2n}W^{-1}]^{-1}=WJ_{2n}^{-1}W^{\top}=-(W^\top)^\top J_{2n}W^{\top}.\]
Hence, $W^{-1}$ and $W^\top $ are symplectic. Now, $VW$ is symplectic since
\[(VW)^\top J_{2n} VW=W^\top V^\top J_{2n} VW=W^\top J_{2n} W=J_{2n}.\] Finally, $U\boxplus V$ is symplectic due to
\[\begin{array}{rcl}
(U\boxplus V)^\top J_{2m+2n} (U\boxplus V)&=&(U^\top \boxplus V^\top )(J_{2m}\boxplus J_{2n} )(U\boxplus V)\\
&=&(U^\top J_{2m}U)\boxplus (V^\top J_{2n}V)\\
&=&J_{2m}\boxplus J_{2n}\\
&=&J_{2m+2n}.
\end{array}\]
\end{proof}

%Let $A\in M_n(\mathbb{R})$ be nonsingular. Then $A\oplus \left(A^{-1}\right)^{\top}$ is symplectic by the previous proposition.

%We observe a property of a matrix whose columns are derived from the columns of a symplectic matrix.
\begin{proposition}
\label{proposition3.1}
Let $W=\begin{bmatrix}
P & Q \\
R & S
\end{bmatrix}\in M_{2n}$ where $P,S \in M_{n}$. 
Then $W$ is symplectic if and only if $P^{\top}S-R^{\top}Q=I_n$ and $P^{\top}R$ and $Q^{\top}S$ are symmetric.
\end{proposition}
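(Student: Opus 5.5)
The plan is to compute $W^\top J_{2n} W$ in block form and compare it blockwise with $J_{2n}$. First, $J_{2n}W$ is obtained by multiplying the block rows:
\[
J_{2n}W=\begin{bmatrix} 0_n & I_n\\ -I_n & 0_n\end{bmatrix}\begin{bmatrix} P & Q\\ R & S\end{bmatrix}=\begin{bmatrix} R & S\\ -P & -Q\end{bmatrix}.
\]
Multiplying on the left by $W^\top=\begin{bmatrix} P^\top & R^\top\\ Q^\top & S^\top\end{bmatrix}$ then gives
\[
W^\top J_{2n}W=\begin{bmatrix} P^\top R-R^\top P & P^\top S-R^\top Q\\ Q^\top R-S^\top P & Q^\top S-S^\top Q\end{bmatrix}.
\]

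Next, I set this equal to $J_{2n}$ and read off the four $n$-by-$n$ block conditions:
\begin{itemize}
\item the $(1,1)$ block gives $P^\top R=R^\top P=(P^\top R)^\top$, so $P^\top R$ is symmetric;
\item the $(2,2)$ block gives $Q^\top S=(Q^\top S)^\top$, so $Q^\top S$ is symmetric;
\item the $(1,2)$ block gives $P^\top S-R^\top Q=I_n$;
\item the $(2,1)$ block requires $Q^\top R-S^\top P=-I_n$.
\end{itemize}
The fourth condition is exactly the negative transpose of the $(1,2)$ condition, since $(P^\top S-R^\top Q)^\top=S^\top P-Q^\top R$. So it is redundant.

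This gives both directions at once. If $W$ is symplectic, the three stated conditions hold. Conversely, if the three conditions hold, the fourth follows by transposition, so all four blocks of $W^\top J_{2n}W$ match those of $J_{2n}$.

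There is no real obstacle here. The only point needing care is noticing the redundancy of the $(2,1)$ block, which is what lets the statement list just three conditions.
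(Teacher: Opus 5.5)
Your proposal is correct and follows the same route as the paper: compute $W^\top J_{2n}W$ in block form and compare each block with $J_{2n}$. You also spell out that the $(2,1)$ block condition is the negative transpose of the $(1,2)$ condition, which the paper's proof leaves implicit.
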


\begin{proof}
Observe that
\begin{equation*}
W^{\top}J_{2n}W =\begin{bmatrix}
P^{\top}R-R^{\top}P & P^{\top}S-R^{\top}Q \\
Q^{\top}R-S^{\top}P & Q^{\top}S-S^{\top}Q 
\end{bmatrix}.\label{symplecticeq1}
\end{equation*}
The assertion follows by comparing the above with the block components of $J_{2n}$.
\end{proof}

\begin{proposition}
\label{proposition3.2}
Let $X=[X_1\ X_2]$ where $X_1,X_2\in M_{2n,k}$. Then $X^\top J_{2n} X=J_{2k}$ if and only if there exist $Y_1,Y_2\in M_{2n,n-k}$ such that $W=[X_1\ Y_1\ X_2\ Y_2]$ is symplectic.
\end{proposition}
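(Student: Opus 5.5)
The reverse direction is a direct computation. Suppose $W=[X_1\ Y_1\ X_2\ Y_2]$ is symplectic. The columns of $X$ are the columns of $W$ indexed by $\{1,\ldots,k\}\cup\{n+1,\ldots,n+k\}$. Hence $X^\top J_{2n}X$ is the principal submatrix of $W^\top J_{2n}W=J_{2n}$ on those indices. That submatrix is $\begin{bmatrix}0_k & I_k\\ -I_k & 0_k\end{bmatrix}=J_{2k}$.

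For the forward direction I will use the symplectic Gram–Schmidt process. The bilinear form $\omega(u,v)=u^\top J_{2n}v$ is nondegenerate and alternating on $\mathbb{R}^{2n}$. Write $x_1,\ldots,x_k$ for the columns of $X_1$ and $x_1',\ldots,x_k'$ for the columns of $X_2$. The hypothesis $X^\top J_{2n}X=J_{2k}$ says exactly that
\[
\omega(x_i,x_j)=0,\qquad \omega(x_i',x_j')=0,\qquad \omega(x_i,x_j')=\delta_{ij}.
\]
In particular these $2k$ vectors are linearly independent: if $\sum a_ix_i+\sum b_ix_i'=0$, pairing with $x_j'$ and with $x_j$ shows all coefficients vanish. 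Let $V$ be their span. The Gram matrix of $\omega$ on $V$ is $J_{2k}$, which is nonsingular, so $V\cap V^{\omega}=\{0\}$. Here $V^{\omega}=\{u:\omega(u,v)=0 \text{ for all } v\in V\}$ is the kernel of the $2k\times 2n$ matrix $X^\top J_{2n}$, which has rank $2k$. Thus $\dim V^{\omega}=2n-2k$ and $\mathbb{R}^{2n}=V\oplus V^{\omega}$.

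Next I show $\omega$ restricted to $V^{\omega}$ is nondegenerate. If $u\in V^{\omega}$ pairs to zero with all of $V^{\omega}$, then it also pairs to zero with $V$, hence with all of $\mathbb{R}^{2n}$, so $u=0$. The key step is then to produce a symplectic basis $y_1,\ldots,y_{n-k},y_1',\ldots,y_{n-k}'$ of $V^{\omega}$, meaning $\omega(y_i,y_j)=\omega(y_i',y_j')=0$ and $\omega(y_i,y_j')=\delta_{ij}$. I do this by induction on dimension:
\begin{itemize}
\item Pick any nonzero $y_1\in V^{\omega}$.
\item By nondegeneracy choose $y_1'\in V^{\omega}$ with $\omega(y_1,y_1')=1$. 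Then $\omega(y_1,y_1)=\omega(y_1',y_1')=0$ automatically, since $\omega$ is alternating.
\item Pass to the $\omega$-complement of $\mathrm{span}\{y_1,y_1'\}$ inside $V^{\omega}$. The same argument as above shows it is complementary, has dimension two less, and $\omega$ is still nondegenerate there, so the induction continues.
\end{itemize}
This inductive construction is the only nonroutine ingredient, and it is standard.

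Finally set $Y_1=[y_1\ \cdots\ y_{n-k}]$ and $Y_2=[y_1'\ \cdots\ y_{n-k}']$. Then $W=[X_1\ Y_1\ X_2\ Y_2]$ satisfies $W^\top J_{2n}W=J_{2n}$ entrywise:
\begin{itemize}
\item pairs within $\{x_i,x_i'\}$ are correct by hypothesis;
\item pairs within $\{y_i,y_i'\}$ are correct by construction;
\item cross pairs between $V$ and $V^{\omega}$ vanish.
\end{itemize}
So $W$ is symplectic. The edge cases $k=0$ and $k=n$ are trivial: for $k=n$, the matrix $X$ is itself symplectic.
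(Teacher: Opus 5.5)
Your proof is correct and follows the paper's route. The reverse direction is the same computation: $X^\top J_{2n}X$ is a principal submatrix of $W^\top J_{2n}W=J_{2n}$. The forward direction is the symplectic Gram--Schmidt extension, which the paper simply cites as a known theorem, whereas you prove it yourself via the splitting $\mathbb{R}^{2n}=V\oplus V^{\omega}$ and an inductive choice of a symplectic basis of $V^{\omega}$.
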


\begin{proof}
The backward implication holds since
\begin{equation*}
J_{2k}\boxplus J_{2n-2k}=J_{2n}=W^{\top}J_{2n}W =\begin{bmatrix}
X_{1}^{\top}J_{2n}X_{1} & *& X_{1}^{\top}J_{2n}X_{2}&* \\
* & * & * & * \\
X_{2}^{\top}J_{2n}X_{1} & * & X_{2}^{\top}J_{2n}X_{2} & * \\
* & * & * & *
\end{bmatrix}.
\end{equation*}
The forward implication follows from the symplectic analogue of the Gram-Schmidt process (see \cite[Theorem 1.1]{sympgeom} or \cite[Theorem 1.15]{sympgeomandquantmech}).
\end{proof}

The geometric mean and arithmetic mean are basic examples of means; such a notion is generalized in the following way \cite{means, posdef}.

\begin{definition}
    A \textit{(generalized) mean} $\mathbf M$ is a positive continuous function on $\left( 0,\infty \right) \times \left( 0,\infty \right)$ that satisfies the following conditions for all $a,b>0$:
\begin{enumerate}
	\item[(i)] $\mathbf M(a,b)=\mathbf M(b,a)$;
	\item[(ii)] $\mathbf M\left( r a,r b \right)=r \mathbf M(a,b)$ for all $r>0$;
	\item[(iii)] $\mathbf M(a,b)$ is monotone increasing in $a$ and $b$;
	\item[(iv)] If $a\leq b$, then $a\leq \mathbf M(a,b)\leq b$.
\end{enumerate}
\end{definition}

Observe that $\mathbf{M}(a,a)=a$ for all $a>0$. In \cite{sympschurhorn}, the authors considered symplectic diagonals corresponding to the geometric mean and arithmetic mean. We generalize this notion, and define an associated symplectic diagonal for a given mean.

\begin{definition}
Let $A=[a_{ij}]_{i,j=1}^{2n} \in M_{2n}$ where $a_{ii}>0$. The \textit{symplectic diagonal of} $A$ \textit{associated with a given mean} $\mathbf{M}$ is $\textup{diag}_{\mathbf M}(A)=\begin{bmatrix}\mathbf M\left( a_{i}, a_{n+i} \right)\end{bmatrix}_{i=1}^n.$
\end{definition}

\section{Symplectic eigenvalues}
%In \cite{williamsons}, Williamson showed that if $A \in M_{2n}\left( \mathbb{R} \right)$ is positive definite, then there exists symplectic $U\in M_{2n}(\mathbb{R})$ such that $U^{\top}AU=D\oplus D$,	where $D\in M_n(\mathbb{R})$ is a positive definite diagonal matrix. The entries of $D$ are called the \textit{symplectic eigenvalues} of $A$.
Let $A\in M_{2n}$ be positive definite. By Williamson's theorem \cite{sympeigenvalues,williamsons}, $A=W(D \oplus D)W^\top $ where $W\in M_{2n}$ is symplectic and $D=\textup{diag}(\delta_1,\ldots, \delta_n)$. The diagonal matrix $D$ is unique up to permutation of the diagonal entries. Now, Proposition \ref{proposition3.1} implies that $P\oplus P$ is symplectic for any permutation matrix $P$, and so it may be assumed that $A=\widetilde{W} (\widetilde{D}\oplus \widetilde{D} )\widetilde{W}^\top$ where $\widetilde{W}\in M_{2n}$ is symplectic and $\widetilde{D}=\textup{diag}(\delta_1^\uparrow,\ldots,\delta_n^\uparrow).$ The diagonal entries $\delta_1,\ldots,\delta_n$, are known as the \textit{symplectic eigenvalues} of $A$. In this case, denote by $\delta(A)=[\delta_j^\uparrow]_{j=1}^n$.

As observed in \cite{sympschurhorn,sympeigenvalues}, the next result gives a characterization of symplectic eigenvalues.

\begin{proposition}
\label{proposition4.3}
Let $A\in M_{2n}$ be positive definite with symplectic eigenvalues $\delta_1,\ldots,\delta_n$. Then $\{|\lambda|: \lambda\in \sigma(J_{2n}A)\}=\{|\lambda|: \lambda\in \sigma(A^{\frac12}J_{2n}A^{\frac12})\}=\{\delta_1,\ldots,\delta_n\}.$

%The modulus of eigenvalues of $J_{2n}A$ or $A^{1/2}J_{2n}A^{1/2}$ are the symplectic eigenvalues of $A$.
\end{proposition}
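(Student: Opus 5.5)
We plan to prove both equalities by relating $J_{2n}A$ and $A^{\frac12}J_{2n}A^{\frac12}$ to a matrix whose spectrum can be read off directly. Write $A=W(D\oplus D)W^\top$ with $W$ symplectic and $D=\textup{diag}(\delta_1,\ldots,\delta_n)$, as given by Williamson's theorem.

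\textbf{First equality.} Since $A^{\frac12}$ is nonsingular,
\[
J_{2n}A=A^{-\frac12}\bigl(A^{\frac12}J_{2n}A^{\frac12}\bigr)A^{\frac12}.
\]
Thus $J_{2n}A$ is similar to $A^{\frac12}J_{2n}A^{\frac12}$. The two matrices have the same spectrum, hence the same set of moduli of eigenvalues.

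\textbf{Second equality.} Here we use the symplectic structure. By Proposition \ref{kd_prop2.4}, $W^\top$ is symplectic, so $W^\top J_{2n}W=J_{2n}$, which gives $J_{2n}W=(W^\top)^{-1}J_{2n}$. Therefore
\[
J_{2n}A=J_{2n}W(D\oplus D)W^\top=(W^\top)^{-1}J_{2n}(D\oplus D)W^\top .
\]
So $J_{2n}A$ is similar to
\[
J_{2n}(D\oplus D)=\begin{bmatrix}0_n & D\\ -D & 0_n\end{bmatrix}.
\]
This matrix is the expanding sum $\boxplus_{j=1}^n\begin{bmatrix}0&\delta_j\\-\delta_j&0\end{bmatrix}$. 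Expanding sums are similar to the corresponding direct sums, so its eigenvalues are the union of the eigenvalues of the $2\times 2$ blocks. The block $\begin{bmatrix}0&\delta_j\\-\delta_j&0\end{bmatrix}$ has characteristic polynomial $\lambda^2+\delta_j^2$, so its eigenvalues are $\pm i\delta_j$. Since $\delta_j>0$, these have modulus $\delta_j$. Hence $\{|\lambda|:\lambda\in\sigma(J_{2n}A)\}=\{\delta_1,\ldots,\delta_n\}$.

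\textbf{Main point.} The only nontrivial step is the intertwining identity $J_{2n}W=(W^\top)^{-1}J_{2n}$. It follows from the symplecticity of $W^\top$ in Proposition \ref{kd_prop2.4}, so no serious obstacle is expected.
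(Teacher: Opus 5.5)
Your proposal is correct and follows essentially the same route as the paper. The first equality comes from the similarity $J_{2n}A=A^{-\frac12}(A^{\frac12}J_{2n}A^{\frac12})A^{\frac12}$. The second comes from Williamson's theorem plus a symplectic intertwining, showing $J_{2n}A$ is similar to $J_{2n}(D\oplus D)=\boxplus_{j=1}^n\delta_jJ_2$; the paper writes $W^\top AW=D\oplus D$ and conjugates by $W$, while you write $A=W(D\oplus D)W^\top$ and conjugate by $W^\top$. One small slip: $W^\top J_{2n}W=J_{2n}$ is simply the definition of $W$ being symplectic, so you don't need Proposition \ref{kd_prop2.4} there (symplecticity of $W^\top$ would instead give $WJ_{2n}W^\top=J_{2n}$).
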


\begin{proof}
Observe that
\begin{align*}
\det\left( \lambda I_{n}-J_{2n}A \right)  & =\det\left( A^{-1/2}\left( \lambda I_{n}-A^{1/2}J_{2n}A^{1/2} \right)A^{1/2} \right) \\
 &=\det \left( \lambda I_{n}-A^{1/2}J_{2n}A^{1/2} \right).
\end{align*}
Hence, $\sigma(J_{2n}A)=\sigma(A^{1/2}J_{2n}A^{1/2})$, and it suffices to prove the assertion for $J_{2n}A$. By \cite{sympeigenvalues,williamsons}, $W^{\top}AW=D\oplus D$ where $W\in M_{2n}$ is symplectic and $D=\mathrm{diag}(\delta_1,\dots,\delta_n)$. Now, Proposition \ref{kd_prop2.4} guarantees that
\[
     W^{-1}J_{2n}AW =W^{-1}J_{2n}(W^{-1})^\top W^\top AW = J_{2n}(D\oplus D) =\boxplus^{n}_{j=1}\delta_jJ_2,\]
and so $J_{2n}A$ is similar to $\bigoplus^n_{j=1}\delta_j J_2$. It follows that $\sigma(J_{2n} A)=\{\pm i\delta_1,\ldots,\pm i\delta_n\}$. The assertion follows by taking the modulus of elements of $\sigma(J_{2n}A)$.
\end{proof}

%Symplectically congruent matrices have the same symplectic eigenvalues.

\begin{proposition}
\label{proposition4.4}
Let $A \in M_{2n}$ be positive definite. For any symplectic $W \in M_{2n}$, $A$ and $W^{\top}A W$ have the same symplectic eigenvalues.
\end{proposition}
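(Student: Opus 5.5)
The plan is to compare spectra through Proposition \ref{proposition4.3}, which identifies the symplectic eigenvalues of a positive definite matrix with the moduli of the eigenvalues of $J_{2n}$ times that matrix. First note that $B:=W^{\top}AW$ is again positive definite, since $W$ is nonsingular (Proposition \ref{kd_prop2.4}) and $x^{\top}Bx=(Wx)^{\top}A(Wx)>0$ for $x\neq 0$. It therefore has symplectic eigenvalues, and it suffices to show that $J_{2n}B$ and $J_{2n}A$ are similar.

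The key step is the identity $J_{2n}W^{\top}=W^{-1}J_{2n}$. Since $W^{\top}$ is symplectic by Proposition \ref{kd_prop2.4}, we have $WJ_{2n}W^{\top}=J_{2n}$, so $J_{2n}W^{\top}=W^{-1}J_{2n}$. Hence
\[
J_{2n}B=J_{2n}W^{\top}AW=W^{-1}(J_{2n}A)W,
\]
and $\sigma(J_{2n}B)=\sigma(J_{2n}A)$.

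Applying Proposition \ref{proposition4.3} to both $A$ and $B$ then gives equality of the sets of symplectic eigenvalues. The only subtlety is multiplicities, since Proposition \ref{proposition4.3} is phrased with sets. To handle it, use the proof of Proposition \ref{proposition4.3}: $J_{2n}A$ is similar to $\boxplus_j \delta_j J_2$, whose eigenvalues are $\pm i\delta_j$ counted with multiplicity. So the multiset of symplectic eigenvalues is exactly half of the multiset of moduli of the spectrum of $J_{2n}A$, and this multiset is preserved under similarity.

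Alternatively, one can argue directly with Williamson's theorem, which avoids the multiplicity issue altogether. Write $A=V(D\oplus D)V^{\top}$ with $V$ symplectic. Then $B=(W^{\top}V)(D\oplus D)(W^{\top}V)^{\top}$, and $W^{\top}V$ is symplectic by Proposition \ref{kd_prop2.4}. Since $D$ is unique up to permutation, $B$ has the same symplectic eigenvalues as $A$. This route is cleaner, and I would present it as the main proof.
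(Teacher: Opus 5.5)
Your proposal is correct. Your first argument is the paper's proof: the paper writes $J_{2n}(W^{\top}AW)=(J_{2n}W^{\top}J_{2n}^{-1})J_{2n}AW=W^{-1}(J_{2n}A)W$ and invokes Proposition \ref{proposition4.3}, and your identity $J_{2n}W^{\top}=W^{-1}J_{2n}$ is the same computation. Your remark on multiplicities is a genuine improvement. As stated, Proposition \ref{proposition4.3} only compares sets, so the paper's argument tacitly needs the similarity $J_{2n}A\sim\boxplus_j\delta_jJ_2$ from its proof to get equality with multiplicity, and you make this explicit. You also check that $W^{\top}AW$ is positive definite, which the paper leaves unsaid.

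The route you would present as your main proof is different. It skips spectra and reads the result off the definition, using only closure of symplectic matrices under transposes and products together with the uniqueness of $D$ in Williamson's theorem. It is shorter and handles multiplicities automatically. However, it rests on the uniqueness of $D$ up to permutation, which the paper asserts without proof. That uniqueness is normally proved by exactly your first argument: for any Williamson decomposition, the eigenvalues of $J_{2n}A$ are $\pm i\delta_j$ with multiplicity. So the two proofs are logically close. The Williamson route hides the similarity argument inside the uniqueness clause, while the paper's route, with your multiplicity fix, needs only the existence part of Williamson's theorem and in fact yields uniqueness along the way.
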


\begin{proof} By the similarity
\begin{equation*}
J_{2n}(W^{\top}AW) =J_{2n}W^{\top}J_{2n}^{-1}J_{2n}AW=(J_{2n}W^{\top}J_{2n}^{-1})J_{2n}AW=W^{-1}(J_{2n}A)W,
\end{equation*}
note that $\sigma(J_{2n}W^{\top}AW)=\sigma(J_{2n}A)$, and so the assertion holds by Proposition \ref{proposition4.3}.
\end{proof}

%We note that the expanding sum of positive definite matrices is positive definite and the expanding sum of symplectic matrices are symplectic.

\begin{proposition}
\label{proposition4.6}
Let $A \in M_{2m}$ and $B \in M_{2n}$ be positive definite. Then the symplectic eigenvalues of $A\boxplus B$ are the symplectic eigenvalues of $A$ and $B$.
\end{proposition}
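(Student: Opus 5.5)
The plan is to apply Williamson's theorem to each summand separately and then glue the two normal forms together with the expanding sum. By Williamson's theorem, write $A=U(D_1\oplus D_1)U^\top$ and $B=V(D_2\oplus D_2)V^\top$. Here $U\in M_{2m}$ and $V\in M_{2n}$ are symplectic, $D_1=\mathrm{diag}(\alpha_1,\ldots,\alpha_m)$ lists the symplectic eigenvalues of $A$, and $D_2=\mathrm{diag}(\beta_1,\ldots,\beta_n)$ lists those of $B$.

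First I check that $A\boxplus B$ is positive definite, so that its symplectic eigenvalues are defined. This holds because $A\boxplus B$ is permutation-similar (indeed permutation-congruent) to $A\oplus B$.

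Next I use the multiplicativity of the expanding sum, together with the transpose identity $(U\boxplus V)^\top=U^\top\boxplus V^\top$, which is immediate from the block definition. These give
\[
A\boxplus B=(U\boxplus V)\bigl((D_1\oplus D_1)\boxplus(D_2\oplus D_2)\bigr)(U\boxplus V)^\top .
\]
By Proposition \ref{kd_prop2.4}, $W:=U\boxplus V$ is symplectic. Directly from the definition of $\boxplus$,
\[
(D_1\oplus D_1)\boxplus(D_2\oplus D_2)=(D_1\oplus D_2)\oplus(D_1\oplus D_2).
\]
So $A\boxplus B=W(D\oplus D)W^\top$ with $D=D_1\oplus D_2$. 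This is a Williamson decomposition of $A\boxplus B$.

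Finally, the symplectic eigenvalues of $A\boxplus B$ are the diagonal entries of $D$, namely $\alpha_1,\ldots,\alpha_m,\beta_1,\ldots,\beta_n$ counted with multiplicity. This uses the uniqueness of $D$ up to permutation stated after Williamson's theorem. Alternatively, one can argue via Proposition \ref{proposition4.4} applied to $W^{-1}$, with $W^{-1}$ symplectic by Proposition \ref{kd_prop2.4}.

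A second route, via Proposition \ref{proposition4.3}, would note that $J_{2m+2n}(A\boxplus B)=(J_{2m}A)\boxplus(J_{2n}B)$. The right-hand side is similar to $(J_{2m}A)\oplus(J_{2n}B)$, so its spectrum is the union of the two spectra. However, Proposition \ref{proposition4.3} is stated only at the level of sets, so this route loses multiplicities. For that reason I prefer the Williamson route.

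The only mildly delicate step is the bookkeeping identity $(D_1\oplus D_1)\boxplus(D_2\oplus D_2)=(D_1\oplus D_2)\oplus(D_1\oplus D_2)$, together with the transpose compatibility of $\boxplus$. Both follow by direct inspection of the block definition.
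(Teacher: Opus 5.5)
Your proof is correct, but it takes a different route from the paper's.

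The paper's argument is the ``second route'' you set aside. It notes that $J_{2m+2n}(A\boxplus B)=J_{2m}A\boxplus J_{2n}B$ is similar to $J_{2m}A\oplus J_{2n}B$. Hence $\sigma(J_{2m+2n}(A\boxplus B))=\sigma(J_{2m}A)\cup\sigma(J_{2n}B)$, and Proposition \ref{proposition4.3} finishes.

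You instead glue the two Williamson normal forms. The matrix $U\boxplus V$ is an explicit symplectic matrix carrying $A\boxplus B$ to $(D_1\oplus D_2)\oplus(D_1\oplus D_2)$. You then read off the symplectic eigenvalues from the definition, using uniqueness of $D$ up to permutation. All your bookkeeping identities check out.

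What your approach buys is an explicit symplectic diagonalizer for $A\boxplus B$ and automatic tracking of multiplicities. The cost is that it rests on the uniqueness half of Williamson's theorem, which the paper quotes without proof. Your fallback via Proposition \ref{proposition4.4} does not avoid this dependence. You still need the symplectic eigenvalues of $D\oplus D$ to be the diagonal entries of $D$, which is again well-definedness. Also, the paper proves Proposition \ref{proposition4.4} through Proposition \ref{proposition4.3}.

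The paper's spectral route is shorter and is essentially a proof of that uniqueness. Your objection that, as written, it loses multiplicities is fair: Proposition \ref{proposition4.3} and the union of spectra are set-level statements. The fix is immediate at the level of characteristic polynomials. The proof of Proposition \ref{proposition4.3} shows that $J_{2n}A$ is similar to $\boxplus_{j}\delta_jJ_2$, so $\det(\lambda I_{2n}-J_{2n}A)=\prod_{j}(\lambda^2+\delta_j^2)$. This polynomial determines the $\delta_j>0$ with multiplicity. Moreover, the characteristic polynomial of $J_{2m+2n}(A\boxplus B)$ factors as the product of those of $J_{2m}A$ and $J_{2n}B$.
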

\begin{proof}
Observe that $J_{2m+2n}(A\boxplus B)=J_{2m}A\boxplus J_{2n}B$ is similar to $J_{2m}A \oplus J_{2n}B$. Hence, $\sigma(J_{2m+2n}(A\boxplus B))=\sigma(J_{2m}A)\cup \sigma(J_{2n}B).$ The assertion follows from Proposition \ref{proposition4.3}.
\end{proof}

\section{Symplectic Schur-Horn theorem}

%In \cite{sympeigenvalues}, Bhatia and Jain proved that if $A \in M_{2n}\left( \mathbb{R} \right)$ is positive definite and $\mathcal{C}^{s}(A)$ is an $s$-pinching of $A$ relative to $\left( m_{1},\dots,m_{k} \right)$. Then $d_{s}\left( \mathcal{C}^{s}(A) \right) \prec^{w} d_{s}(A).$

%Given $x,y\in \mathbb{R}^n$, denote $x\leq y$ when $x_i\leq y_i$ for all $i=1,\dots,n$. In \cite{majorization}, it was shown that $x\prec^{w} y$ if and only if there exists $z \in \mathbb{R}^{n}$ such that $z\prec y$ and $z\leq x$. We are now ready to prove the symplectic Schur and Horn theorems for generalized means.

Given $x=[x_j],y=[y_j]\in \mathbb{R}^n$, write $x\geq y$ to mean $x_j\geq y_j$ for all $j=1,\ldots,n$. In this case, $x\prec^w y$. 

We are now ready to prove the symplectic Schur's theorem for generalized means. The argument follows the strategy in the forward implication of \cite[Theorem 1]{sympschurhorn}, extended to the general case.

\begin{theorem}
\label{theorem6.2}
Let $\mathbf M$ be a mean such that $\sqrt{ab}\leq \mathbf M(a,b)$ for all $a,b>0$. If $A\in M_{2n}$ is positive definite, then $\textup{diag}_{\mathbf M}(A) \prec^{w} \delta(A).$ 
\end{theorem}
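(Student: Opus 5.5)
Since $\mathbf M(a,b)\ge\sqrt{ab}$ entrywise, we have $\textup{diag}_{\mathbf M}(A)\ge \textup{diag}_{\mathbf G}(A)$, where $\mathbf G(a,b)=\sqrt{ab}$ is the geometric mean. Weak supermajorization is preserved by increasing the left vector entrywise: if $x\ge z$ then $x^\uparrow\ge z^\uparrow$ entrywise, so every partial sum of the increasing rearrangement can only grow. It therefore suffices to prove $\textup{diag}_{\mathbf G}(A)\prec^w\delta(A)$, i.e. the geometric-mean case. The whole argument thus reduces to the geometric case, which I would handle directly as follows.

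\textbf{Step 1: reduction to $2\times 2$ blocks.} Take the $s$-pinching $\mathcal C^s(A)$ relative to $(1,1,\dots,1)$. This gives $\mathcal C^s(A)=\boxplus_{j=1}^n A_j$, where
\[
A_j=\begin{bmatrix} a_{jj} & a_{j,n+j}\\ a_{n+j,j} & a_{n+j,n+j}\end{bmatrix}
\]
is positive definite. By Proposition \ref{proposition4.6}, the symplectic eigenvalues of $\mathcal C^s(A)$ are those of the $A_j$. For a $2\times2$ positive definite matrix, Proposition \ref{proposition4.3} shows its symplectic eigenvalue is $|\lambda|$ for $\lambda\in\sigma(J_2A_j)$. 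Since $J_2A_j$ has trace $0$ and determinant $\det A_j$, this symplectic eigenvalue is $\sqrt{\det A_j}$. Hence
\[
\delta_j(\mathcal C^s(A))=\sqrt{a_{jj}a_{n+j,n+j}-a_{j,n+j}^2}\le \sqrt{a_{jj}a_{n+j,n+j}}=\mathbf G(a_{jj},a_{n+j,n+j}).
\]
So $\textup{diag}_{\mathbf G}(A)\ge \delta(\mathcal C^s(A))$ entrywise after matching indices, and it remains to show $\delta(\mathcal C^s(A))\prec^w\delta(A)$.

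\textbf{Step 2: the pinching inequality (main obstacle).} I would prove $\sum_{j=1}^k\delta_j^\uparrow(\mathcal C^s(A))\ge\sum_{j=1}^k\delta_j^\uparrow(A)$ using the symplectic Ky Fan minimum principle \cite[Theorem 5]{sympeigenvalues}:
\[
\min\{\mathrm{tr}(X^\top BX): X^\top J_{2n}X=J_{2k}\}=2\sum_{j=1}^k\delta_j^\uparrow(B).
\]
Write $\mathcal C^s(A)=W(D\oplus D)W^\top$ with $W=\boxplus_j W_j$ and $D=\mathrm{diag}(\delta_1,\dots,\delta_n)$, where the $W_j\in M_2$ are symplectic Williamson factors of the $A_j$. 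By Proposition \ref{kd_prop2.4}, $W$ is symplectic. Choose the $k$ indices $S$ realizing the $k$ smallest $\delta_j$. Let $X$ be the $2n\times 2k$ matrix formed from the columns of $(W^{-1})^\top$ indexed by $j$ and $n+j$ for $j\in S$. Since $(W^{-1})^\top$ is symplectic, $X^\top J_{2n}X=J_{2k}$ (this is the easy direction of Proposition \ref{proposition3.2}, applied after a symplectic permutation $P\oplus P$). Then
\[
\mathrm{tr}(X^\top\mathcal C^s(A)X)=2\sum_{j\in S}\delta_j.
\]

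The key point is the identity $\mathrm{tr}(X^\top AX)=\mathrm{tr}(X^\top\mathcal C^s(A)X)$. It holds because each column of $X$ is supported only on coordinates $\{j,n+j\}$ for a single $j$, owing to the block structure of $(W^{-1})^\top=\boxplus_j (W_j^{-1})^\top$. Consequently each quadratic form $x^\top Ax$ involves only the diagonal $2\times2$ block entries, which agree with those of $\mathcal C^s(A)$. Ky Fan applied to $A$ then gives
\[
2\sum_{j=1}^k\delta_j^\uparrow(A)\le \mathrm{tr}(X^\top AX)=2\sum_{j=1}^k\delta_j^\uparrow(\mathcal C^s(A)).
\]

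\textbf{Step 3: combine.} Using the transitivity remark from the opening, $\textup{diag}_{\mathbf M}(A)\ge\textup{diag}_{\mathbf G}(A)\ge\delta(\mathcal C^s(A))\prec^w\delta(A)$, which gives $\textup{diag}_{\mathbf M}(A)\prec^w\delta(A)$.

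The delicate points are the support claim in Step 2 and the index bookkeeping. I expect the support claim to be the main obstacle.
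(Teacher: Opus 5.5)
Your proof is correct, and its skeleton matches the paper's. You pinch relative to $(1,\dots,1)$, read off the $2\times 2$ symplectic eigenvalues $\sqrt{\alpha_j\gamma_j-\beta_j^2}\le\sqrt{\alpha_j\gamma_j}\le\mathbf M(\alpha_j,\gamma_j)$, and chain with $\delta(\mathcal C^s(A))\prec^w\delta(A)$. Your preliminary reduction to the geometric mean is only a regrouping of the paper's inequality chain.

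The real difference is in that last ingredient. The paper imports the pinching inequality as a black box from \cite[Theorem 9]{sympeigenvalues}. You instead derive it, for this particular pinching, from the symplectic Ky Fan principle \cite[Theorem 5]{sympeigenvalues}, by testing it on columns of $(W^{-1})^\top=\boxplus_j(W_j^{-1})^\top$.

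The step you flagged as the main obstacle does hold. In an expanding sum $\boxplus_j B_j$, columns $j$ and $n+j$ have nonzero entries only in rows $j$ and $n+j$. So each column $x$ of your $X$ satisfies $x^\top Ax=x^\top\mathcal C^s(A)x$. With the ordering $(j_1,\dots,j_k,n+j_1,\dots,n+j_k)$, the chosen index set carves $J_{2k}$ out of $J_{2n}$ and $D_S\oplus D_S$ out of $D\oplus D$.

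Your route buys self-containment and a clearer picture relative to Theorem~\ref{equiv}. The trace form of Ky Fan is exactly condition (ii) there for the arithmetic mean, so Theorem~\ref{equiv} alone would give Schur only for the arithmetic mean. Your explicit test-matrix argument is what upgrades this to the geometric mean, and hence to every $\mathbf M\ge\sqrt{ab}$. The paper's citation buys brevity.
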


\begin{proof}
 Write $A=\begin{bmatrix}A_{11}& A_{12} \\ A_{12}^{\top} & A_{22}\end{bmatrix}$ where $A_{11},A_{22} \in M_{n}$ and let $\mathcal{C}^s(A)$ be the $s$-pinching of $A$ relative to $(1,\dots,1)$. Then
  $
  \mathcal{C}^{s}(A)=\boxplus_{j=1}^n\begin{bmatrix}
      \alpha_{j} & \beta_{j} \\ 
      \beta_{j} & \gamma_{j}
  \end{bmatrix}
  $
  where $\alpha_{j}, \beta_j,\gamma_j$ are the main diagonal entries of $A_{11},A_{12},A_{22}$, respectively. Since $J_{2n}\mathcal{C}_{s}(A)=\boxplus_{j=1}^n\begin{bmatrix}     \beta_j   & \gamma_j\\-\alpha_j  &  -\beta_j \end{bmatrix}$, it follows that the symplectic eigenvalues of $\mathcal{C}_s(A)$ are $\sqrt{\alpha_j\gamma_j-\beta_j^2}$ for $j=1,\ldots,n$. Set $y=[\sqrt{\alpha_j\gamma_j-\beta_j^2}]_{j=1}^n$. Now, for each $j=1,\ldots, n$, 
  \[y_j=\sqrt{\alpha_j\gamma_j-\beta_j^2}\leq \sqrt{\alpha_j\gamma_j}\leq \mathbf{M}(\alpha_j,\gamma_j).\]
  It follows that $\textup{diag}_{\mathbf M}(A)\prec^w y$. By \cite[Theorem 9]{sympeigenvalues}, $y\prec^w \delta(A)$. Thus, $\textup{diag}_{\mathbf{M}}(A)\prec^w \delta(A)$ by transitivity \cite[Remark II.1.2]{matan2}.\end{proof}

The next result is the symplectic Horn's theorem for generalized means. The argument follows that of the backward implication of \cite[Theorem 1]{sympschurhorn}, but treats generalized means directly, yielding a unified proof that includes as special cases Theorems 1 and 3 in \cite{sympschurhorn}.
\begin{theorem}
\label{theorem6.3}
Let $\mathbf{M}$ be a mean. If $x,y\in \mathbb{R}^n$ have positive entries such that $x\prec ^{w}y$, then there exists positive definite $A\in M_{2n}$ such that $x=\textup{diag}_{\mathbf M}(A)$ and $y=\delta(A)$.
\end{theorem}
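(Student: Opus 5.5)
The plan is to reduce to the classical Horn theorem by passing from weak supermajorization to ordinary majorization, and then to raise the diagonal entries by a symplectic congruence, which by Proposition \ref{proposition4.4} does not change the symplectic eigenvalues.

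\emph{Step 1 (reduction to majorization).} Since $x\prec^w y$, I will use the standard fact that there is a vector $u\in\mathbb{R}^n$ with $u\prec y$ and $x\geq u$ (see \cite{matan2}). If a direct argument is needed: lower the entries of $x$ one at a time, starting from the largest, until the total sums agree. The partial sums of the smallest entries must still dominate those of $y$. Because $u\prec y$ and $y$ has positive entries, every $u_j$ lies between $\min y$ and $\max y$, so $u$ also has positive entries.

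\emph{Step 2 (Horn's theorem).} By Horn's theorem \cite[Theorem 4.3.48]{matan1} there is a real symmetric $B\in M_n$ with $\mathrm{diag}(B)=u$ and eigenvalues $y_1,\ldots,y_n$. Since $y>0$, $B$ is positive definite. Write $B=O\Lambda O^\top$ with $O$ orthogonal. By Proposition \ref{proposition3.1}, $O\oplus O$ is symplectic, since $O^\top O=I$ and the off-diagonal conditions are trivial. Hence $B\oplus B=(O\oplus O)(\Lambda\oplus\Lambda)(O\oplus O)^\top$ is a Williamson form, and $\delta(B\oplus B)=y$. Its symplectic diagonal is $[\mathbf M(u_j,u_j)]=u$, because $\mathbf M(a,a)=a$.

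\emph{Step 3 (raising the diagonal).} Choose $s_j\geq 0$ with $u_j\sqrt{1+s_j^2}=x_j$. This is possible since $x_j\geq u_j>0$. Put $S=\mathrm{diag}(s_1,\ldots,s_n)$ and $R=\mathrm{diag}(r_1,\ldots,r_n)$ with $r_j^2=\sqrt{1+s_j^2}$. By Propositions \ref{kd_prop2.4} and \ref{proposition3.1}, both $\begin{bmatrix} I&0\\ S&I\end{bmatrix}$ ($S$ symmetric) and $R\oplus R^{-1}$ are symplectic, and so is their product $W$. Set
\[
A=W^\top(B\oplus B)W, \qquad W=\begin{bmatrix} I&0\\ S&I\end{bmatrix}(R\oplus R^{-1}).
\]
Then
\[
\begin{bmatrix} I&S\\ 0&I\end{bmatrix}(B\oplus B)\begin{bmatrix} I&0\\ S&I\end{bmatrix}=\begin{bmatrix} B+SBS & SB\\ BS & B\end{bmatrix},
\]
whose diagonal entries are $u_j(1+s_j^2)$ in the upper block and $u_j$ in the lower block. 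Congruence by $R\oplus R^{-1}$ multiplies these by $r_j^2$ and $r_j^{-2}$ respectively, so both become $u_j\sqrt{1+s_j^2}=x_j$. Therefore $\mathrm{diag}_{\mathbf M}(A)=[\mathbf M(x_j,x_j)]=x$. Moreover $A$ is positive definite and, by Proposition \ref{proposition4.4}, $\delta(A)=y$.

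The main obstacle is Step 3: making the construction work for an arbitrary mean. The paper's hypothesis allows means such as $\min$, which cannot be pushed upward by unbalanced diagonal changes. The point is to keep the two diagonal entries of each pair equal, so that only $\mathbf M(a,a)=a$ is used. A second, minor point is verifying the reduction lemma in Step 1.
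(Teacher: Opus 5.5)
Your Steps 1 and 2 are the same as the paper's. The idea of Step 3 is sound, but the computation there is wrong as written: the scaling $R\oplus R^{-1}$ goes in the wrong direction. The $j$th column of $W=\begin{bmatrix} I&0\\ S&I\end{bmatrix}(R\oplus R^{-1})$ is $r_j(e_j+s_je_{n+j})$, and its $(n+j)$th column is $r_j^{-1}e_{n+j}$. Hence
\[
a_{jj}=r_j^2\,u_j(1+s_j^2),\qquad a_{n+j,n+j}=r_j^{-2}\,u_j .
\]
With your choice $r_j^2=\sqrt{1+s_j^2}$, these equal $u_j(1+s_j^2)^{3/2}$ and $u_j(1+s_j^2)^{-1/2}$. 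For $s_j>0$ they are neither equal to each other nor to $x_j$, so $\mathrm{diag}_{\mathbf M}(A)\neq x$ for, e.g., the arithmetic mean or $\min$. Only the geometric mean survives, because the scaling preserves the product. The fix is $r_j^2=(1+s_j^2)^{-1/2}$, i.e.\ congruence by $R^{-1}\oplus R$. Then both entries equal $u_j\sqrt{1+s_j^2}=x_j$, and the rest of your argument (positive definiteness, and $\delta(A)=y$ via Proposition~\ref{proposition4.4}) stands.

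Separately, the fallback sketch in your Step 1 is too vague to be correct as stated. Lowering the largest entry alone can break the partial-sum inequalities: for $x=(2,2)$, $y=(1,1)$ it gives $(2,0)$. What works is truncation from the top, $u_j=\min(x_j,c)$, with $c$ chosen so that $\sum_j u_j=\sum_j y_j$. Since you also cite the lemma, this is minor.

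With the correction, your Step 3 is a genuinely different route from the paper's. The paper defines $f$ on $SL(2,\mathbb R)$ by $f=\mathbf M(p^2+q^2,r^2+s^2)$. It uses connectedness of $SL(2,\mathbb R)$, continuity of $\mathbf M$, and the bound $\mathbf M(a,b)\ge\min(a,b)$ (for unboundedness) to get $[1,\infty)\subseteq f(SL(2,\mathbb R))$. It then picks blocks with $f=x_j/z_j$, whose two diagonal contributions are generally unequal, and concludes by homogeneity of $\mathbf M$.

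Your corrected blocks $\begin{bmatrix} r_j&0\\ s_jr_j&r_j^{-1}\end{bmatrix}$ have two columns of equal squared norm $\sqrt{1+s_j^2}$. So each symplectic pair of diagonal entries of $A$ is equal, and only $\mathbf M(a,a)=a$ is used. This buys an explicit construction with no topology. It also gives a single $A$, with $a_{jj}=a_{n+j,n+j}=x_j$, that realizes $x$ for every mean simultaneously. Indeed it works for any function with $\mathbf M(a,a)=a$, with no appeal to continuity, homogeneity, or monotonicity. The paper's argument is existential and its $A$ depends on $\mathbf M$. Its only extra content is that it exhibits the whole range of $f$, i.e.\ many (unbalanced) realizations.
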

\begin{proof}
By \cite[Ch. 5. Sect. A.9.a]{majorization}, there exists $z\in \mathbb{R}^n$ such that $z\prec y$ and $z\leq x$. Since the entries of $y$ are all positive, it follows that the entries of $z$ are also positive. Let $x=[x_j]_{j=1}^n$, $y=[y_j]_{j=1}^n$, $z=[z_j]_{j=1}^n,$ and $Y=\textup{diag}(y_1,\ldots,y_n)$. Horn's theorem guarantees the existence of orthogonal $U\in M_n$ such that $\textup{diag}(UYU^\top)=z$. Note that $U\oplus U$ is symplectic due to Proposition \ref{proposition3.1}. Let $B=(UYU^\top) \oplus (UYU^\top )$. By Proposition \ref{proposition4.4}, $\delta(B)=\delta(Y)=y$. Note that $\textup{diag}_{\mathbf{M}}(B)=[M(z_j,z_j)]_{j=1}^n=z$. Consider the continuous function $f: SL(2,\mathbb R)\to \mathbb{R}$ defined by $f\left( \begin{bmatrix} p& q\\ r& s\end{bmatrix}\right)=\mathbf{M}(p^2+q^2,r^2+s^2)$. Note that $f(I_2)=\mathbf{M}(1,1)=1$, and since $SL(2,\mathbb R)$ is connected \cite{connectedness}, so is $f(SL(2,\mathbb R))$ by continuity of $f$. Moreover, $f(SL(2,\mathbb R))$ is unbounded because
\[f\left(\begin{bmatrix} p& 0\\ p& p^{-1}\end{bmatrix}\right)=\mathbf{M}\left(p^2,p^2+p^{-2}\right)\geq p^2 \ \textup{for any}\ p>0.\] Hence, $[1,\infty)\subseteq f(SL(2,\mathbb R))$. Now, for each $j=1,\ldots, n$, $\frac{x_j}{z_j}\in [1,\infty)$, and this implies $\frac{x_j}{z_j}=\mathbf{M}(p_j^2+q_j^2, r_j^2+s_j^2)$ for some $p_j,q_j,r_j,s_j\in \mathbb{R}$ with $p_js_j-q_jr_j=1$. Let $W=\boxplus_{j=1}^n\begin{bmatrix} p_j& q_j\\ r_j& s_j\end{bmatrix}$. By Propositions \ref{kd_prop2.4}-\ref{proposition3.1}, $W$ is symplectic due to the assumption on $p_j,q_j,r_j,s_j$. Define $A=W BW^{\top}$. Observe that $\delta(A)=\delta(B)=y$ by Proposition \ref{proposition4.4}. Moreover, \[\begin{array}{rcl}\textup{diag}_{\mathbf{M}}(A)&=&[\mathbf{M}(p_j^2z_j+q_j^2z_j,r_j^2z_j+s_j^2z_j)]_{j=1}^n\\
&=&[z_j\mathbf{M}(p_j^2+q_j^2,r_j^2+s_j^2)]_{j=1}^n\\
&=&\left[ z_j\left( \frac{x_j}{z_j}\right)\right]_{j=1}^n\\
&=&x.
\end{array}\] 
\end{proof}

%\begin{theorem}
%\label{theorem6.3}
%Let $x,y\in \mathbb{R}^n$ with positive entries such that $x\prec ^{w}y$ and $\mathbf M(u,v)$ be a mean such that $\mathbf G(u,v)\leq \mathbf M(u,v)$. Define the function $f(t)=M(t,t^{-1})$ where $t\in [1,\infty)$. Suppose $\mathrm{ran }f=[1,\infty)$. Then there exists positive definite $A\in M_{2n}(\mathbb{R})$ such that $x=\Delta_{\mathbf M}(A)$ and $y=d_{s}(A)$.
%\end{theorem}

%Theorems \ref{theorem6.2} and \ref{theorem6.3} generalize the results in \cite{sympschurhorn} \textcolor{red}{[citation]}.

\section{Symplectic Ky Fan's minimum principle}

Given $A\in M_{2n}$, denote by ${\cal S}_k(A)=\{X^\top A X: X\in M_{2n,2k}\ \textup{with}\ X^\top J_{2n} X=J_{2k}\}.$

\begin{theorem}\label{equiv}
The following are equivalent:
\begin{enumerate}[(i)]
\item For all positive definite $A\in M_{2n}$, $\textup{diag}_{\mathbf{M}}(A)\prec^w \delta(A)$;
\item For all positive definite $A\in M_{2n}$ with symplectic eigenvalues $\delta_1,\ldots, \delta_n$, $\sum_{j=1}^k\delta_j^\uparrow=\min\{\sum_{j=1}^k \mathbf{M}(b_{jj},b_{k+j,k+j}): [b_{ij}]_{i,j=1}^{2k}\in {\cal S}_k(A)\}$.
\end{enumerate}
\end{theorem}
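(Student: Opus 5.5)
The plan is to prove both implications directly, reading (i) and (ii) as holding for positive definite matrices of every even size; in particular (i) will be applied to matrices in $M_{2k}$ with $k\le n$. Besides the results stated earlier in the paper, the only external input is the symplectic Ky Fan minimum principle for traces \cite[Theorem 5]{sympeigenvalues} recalled in the introduction. It gives $\min\{\mathrm{tr}(X^\top AX): X\in M_{2n,2k},\ X^\top J_{2n}X=J_{2k}\}=2\sum_{j=1}^k\delta_j^\uparrow$.

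\emph{(ii)$\Rightarrow$(i).} Fix positive definite $A=[a_{ij}]\in M_{2n}$ and $k\in\{1,\ldots,n\}$.
\begin{enumerate}
\item Choose indices $i_1<\cdots<i_k$ at which the entries $\mathbf M(a_{ii},a_{n+i,n+i})$ of $\mathrm{diag}_{\mathbf M}(A)$ are the $k$ smallest.
\item Let $X=[e_{i_1}\cdots e_{i_k}\ e_{n+i_1}\cdots e_{n+i_k}]\in M_{2n,2k}$. A direct check gives $X^\top J_{2n}X=J_{2k}$.
\item The matrix $B=X^\top AX\in{\cal S}_k(A)$ is the principal submatrix of $A$ on these rows and columns. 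Its entries $b_{ll}$ and $b_{k+l,k+l}$ are $a_{i_li_l}$ and $a_{n+i_l,n+i_l}$.
\item Hence (ii) gives $\sum_{j=1}^k(\mathrm{diag}_{\mathbf M}(A))_j^\uparrow=\sum_{l=1}^k\mathbf M(b_{ll},b_{k+l,k+l})\ge\sum_{j=1}^k\delta_j^\uparrow$. Since $k$ is arbitrary, this is exactly $\mathrm{diag}_{\mathbf M}(A)\prec^w\delta(A)$.
\end{enumerate}

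\emph{(i)$\Rightarrow$(ii), attainment.} Write $A=W(D\oplus D)W^\top$ with $W$ symplectic and $D=\mathrm{diag}(\delta_1^\uparrow,\ldots,\delta_n^\uparrow)$. Since $W^{-\top}$ is symplectic by Proposition \ref{kd_prop2.4}, $(W^{-\top})^\top AW^{-\top}=D\oplus D$. Take $X$ to be the columns $1,\ldots,k,n+1,\ldots,n+k$ of $W^{-\top}$. Then $X^\top J_{2n}X=J_{2k}$ by Proposition \ref{proposition3.2}. Also $X^\top AX=D_k\oplus D_k$ with $D_k=\mathrm{diag}(\delta_1^\uparrow,\ldots,\delta_k^\uparrow)$. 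Because $\mathbf M(a,a)=a$, the objective at this $X$ equals $\sum_{j=1}^k\delta_j^\uparrow$.

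\emph{(i)$\Rightarrow$(ii), lower bound.} Let $X$ satisfy $X^\top J_{2n}X=J_{2k}$, and set $B=X^\top AX$.
\begin{enumerate}
\item $X$ has full column rank, because $X^\top J_{2n}X=J_{2k}$ is invertible. Hence $B\in M_{2k}$ is positive definite.
\item Applying (i) to $B$ with the full index $k$ (the total-sum case of weak supermajorization) gives $\sum_{j=1}^k\mathbf M(b_{jj},b_{k+j,k+j})\ge\sum_{j=1}^k\delta_j(B)$.
\item It remains to show $\sum_j\delta_j(B)\ge\sum_{j=1}^k\delta_j^\uparrow(A)$. This symplectic interlacing-type step is the main obstacle.
\item To handle it, take a Williamson decomposition of $B$ and the resulting symplectic $Y\in M_{2k}$ with $Y^\top BY=D_B\oplus D_B$. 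Then $2\sum_j\delta_j(B)=\mathrm{tr}(Y^\top BY)=\mathrm{tr}((XY)^\top A(XY))$.
\item The matrix $XY$ satisfies $(XY)^\top J_{2n}(XY)=Y^\top J_{2k}Y=J_{2k}$. So the trace Ky Fan principle \cite[Theorem 5]{sympeigenvalues} bounds the right side below by $2\sum_{j=1}^k\delta_j^\uparrow(A)$.
\end{enumerate}
Combining the lower bound with attainment yields the minimum formula in (ii).
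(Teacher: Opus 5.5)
Your (ii)$\Rightarrow$(i) argument and the attainment half of (i)$\Rightarrow$(ii) are the paper's arguments; your $X=[e_{i_1}\cdots e_{i_k}\ e_{n+i_1}\cdots e_{n+i_k}]$ is the paper's $P\oplus P$. The lower bound is where you diverge, and there it is valid only under the reading you announce at the start. You apply (i) to $B=X^\top AX\in M_{2k}$, so you need (i) at size $2k$. The paper's proof, by contrast, establishes the equivalence for each fixed $n$. As written, you only show that the all-sizes versions of (i) and (ii) are equivalent.

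You also import the trace principle \cite[Theorem 5]{sympeigenvalues} to pass from $\sum_j\delta_j(B)$ to $\sum_{j\le k}\delta_j^\uparrow(A)$. When $\mathbf M$ is the arithmetic mean, the lower-bound half of (ii) \emph{is} that trace principle. So in that case your (i)$\Rightarrow$(ii) assumes what it is meant to derive, and does not show Ky Fan as a consequence of Schur. What your route does show is that, given this mean-independent interlacing inequality, only the total-sum case of (i) is needed.

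The paper avoids both issues in one step. It extends $X=[X_1\ X_2]$ to a symplectic $W=[X_1\ Y_1\ X_2\ Y_2]\in M_{2n}$ by Proposition~\ref{proposition3.2}, and sets $C=W^\top AW$. Then $\delta(C)=\delta(A)$ by Proposition~\ref{proposition4.4}, and $c_{jj}=b_{jj}$, $c_{n+j,n+j}=b_{k+j,k+j}$ for $j\le k$. It applies the $k$-th partial-sum inequality of (i) to $C$. The sum of the $k$ smallest entries of $\textup{diag}_{\mathbf M}(C)$ is at most the sum of its first $k$ entries, which gives the bound at size $2n$ with no interlacing step and no appeal to any Ky Fan-type result.

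If you prefer to keep your route, you can remove the size issue as follows. Apply (i) at size $2n$ to $B\boxplus\lambda I_{2n-2k}$, with $\lambda$ at least as large as every $\mathbf M(b_{jj},b_{k+j,k+j})$ and every $\delta_j(B)$, using Proposition~\ref{proposition4.6}. The reliance on the trace principle would remain.
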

\begin{proof}
For the forward implication, let $A\in M_{2n}$ be positive definite with symplectic eigenvalues $\delta_1,\ldots,\delta_n$. Let $[b_{ij}]_{i,j=1}^{2k}=X^\top AX$ where $X\in M_{2n,2k}$ with $X^\top J_{2n}X=J_{2k}$. Write $X=[X_1\ X_2]$ where $X_1,X_2\in M_{2n,k}$ and so Proposition \ref{proposition3.2} implies that $W=[X_1\ Y_1\ X_2\ Y_2]$ is symplectic for some $Y_1,Y_2\in M_{2n,n-k}$. Let $C=W^\top A W$, and write $z=[z_j]_{j=1}^n=\textup{diag}_{\mathbf{M}}(C)$. By assumption and Proposition \ref{proposition4.4}, $z=\textup{diag}_{\mathbf{M}}(C)\prec^w \delta(C)=\delta(A)$, and so
\[
\sum_{j=1}^k\delta_j^\uparrow \leq \sum_{j=1}^k z_j^\uparrow
\leq \sum_{j=1}^k z_j
= \sum_{j=1}^k \mathbf{M}(c_{jj},c_{n+j,n+j})
=\sum_{j=1}^k\mathbf{M}(b_{jj}, b_{k+j,k+j}).
\]
Since $X$ is arbitrary, $\sum_{j=1}^k\delta_j^\uparrow\leq \inf\{\sum_{j=1}^k \mathbf{M}(b_{jj},b_{n+j,n+j}): [b_{ij}]_{i,j=1}^{2k}\in {\cal S}_k(A)\}.$ To see that the minimum is attained, Williamson's theorem \cite{sympeigenvalues,williamsons} guarantees that $W^\top AW=D\oplus D$ where $W\in M_{2n}$ is symplectic and $D=\textup{diag}(\delta_1^\uparrow,\ldots,\delta_n^\uparrow)$. Write $W=[X_1\ Y_1\ X_2\ Y_2]$ where $X_1,X_2\in M_{2n,k}$, and consider $X^\top  AX$ where $X=[X_1\ X_2]$. Note that $X^\top AX\in {\cal S}_k(A)$ since $X^\top J_{2n}X=J_{2k}$ due to Proposition \ref{proposition3.2}. Moreover, if $[b_{ij}]_{i,j=1}^{2k}=X^\top AX $, then \[\sum_{j=1}^k \mathbf{M}(b_{jj},b_{k+j,k+j})=\sum_{j=1}^k\mathbf{M}(\delta_j^\uparrow,\delta_j^\uparrow)=\sum_{j=1}^k\delta_j^\uparrow.\] This proves the forward implication. 
 
For the backward implication, let $A=[a_{ij}]_{i,j=1}^{2n}\in M_{2n}$ be positive definite with symplectic eigenvalues $\delta_1,\ldots,\delta_n$. Write $z=[z_j]_{j=1}^n=\textup{diag}_{\mathbf{M}}(A)$. Let $e_1,\ldots, e_n\in\mathbb{R}^n$ denote the standard basis vectors of $\mathbb{R}^n$. We define $e_{i_1},e_{i_2},\ldots, e_{i_n}\in \mathbb{R}^n$ in such a way that $z_1^\uparrow=z_{i_1},z_2^\uparrow=z_{i_2}, \ldots,z_n^\uparrow =z_{i_n}$. In particular, the indices are further described as follows. If $z_j^\uparrow\neq z_\ell^\uparrow$, then there exist distinct $i_j, i_\ell$ such that $z_j^\uparrow=z_{i_j}$ and $z_\ell^\uparrow= z_{i_\ell}$. Suppose $z_j^\uparrow=z_{j+1}^\uparrow=\cdots=z_{j+r}^\uparrow$ are all the possible repetitions of $z_j^\uparrow $ in $z$. There exists unique $(r+1)$-tuple $(i_j,i_{j+1},\ldots,i_{j+r})$ with $i_j<i_{j+1}<\cdots<i_{j+r}$ such that $z_j^\uparrow=z_{i_j}=z_{i_{j+1}}=\cdots=z_{i_{j+r}}$. Now, let $k=1,\ldots,n$, and consider $P=[e_{i_1}\ e_{i_2}\ \cdots\ e_{i_k}]\in M_{n,k}$. Let $X=P\oplus P$, and note that $X^\top J_{2n}X=J_{2k}$ since $P^\top P=I_k$. Then $X^\top AX\in {\cal S}_k(A)$, and its main diagonal entries are $a_{i_1,i_1},\ldots,a_{i_k,i_k},a_{k+i_1,k+i_1},\ldots,a_{k+i_k,k+i_k}$. By assumption, 
\[\sum_{j=1}^k\delta_j^\uparrow\leq\sum_{j=1}^k \mathbf{M}(a_{i_j,i_j},a_{k+i_j,k+i_j})=\sum_{j=1}^kz_j^\uparrow.\] Thus,
$\textup{diag}_{\mathbf{M}}(A)=z\prec^w \delta(A)$.

\end{proof}

The next result is the symplectic Ky Fan's theorem for generalized means, and it immediately follows from Theorems \ref{theorem6.2} and \ref{equiv}.

\begin{corollary}
Let $\mathbf M$ be a mean such that $\sqrt{ab}\leq \mathbf M(a,b)$ for all $a,b>0$. If $A\in M_{2n}$ is positive definite with symplectic eigenvalues $\delta_1,\ldots, \delta_n$, then $\sum_{j=1}^k\delta_j^\uparrow=\min\{\sum_{j=1}^k \mathbf{M}(b_{jj},b_{k+j,k+j}): [b_{ij}]_{i,j=1}^{2k}\in {\cal S}_k(A)\}$.

\end{corollary}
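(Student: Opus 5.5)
The corollary follows by chaining Theorem~\ref{theorem6.2} into the forward implication of Theorem~\ref{equiv}. Fix $\mathbf M$ with $\sqrt{ab}\le \mathbf M(a,b)$ for all $a,b>0$. Theorem~\ref{theorem6.2} gives $\textup{diag}_{\mathbf M}(A)\prec^w\delta(A)$ for \emph{every} positive definite $A\in M_{2n}$. This is condition (i) of Theorem~\ref{equiv}, and it holds for every $n$, so it holds for the particular $n$ of the statement. The implication (i)$\Rightarrow$(ii) of Theorem~\ref{equiv} then yields, for the given $A$ and each $k=1,\dots,n$, the identity $\sum_{j=1}^k\delta_j^\uparrow=\min\{\sum_{j=1}^k\mathbf M(b_{jj},b_{k+j,k+j}) : [b_{ij}]\in{\cal S}_k(A)\}$.

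For the reader's benefit I would spell out the two halves of that forward implication in this setting.

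\textbf{Lower bound.} Take $X=[X_1\ X_2]$ with $X^\top J_{2n}X=J_{2k}$. Complete it, via Proposition~\ref{proposition3.2}, to a symplectic $W=[X_1\ Y_1\ X_2\ Y_2]$. Put $C=W^\top AW$.
\begin{itemize}
\item By Proposition~\ref{proposition4.4}, $\delta(C)=\delta(A)$.
\item The leading $k$ symplectic diagonal entries of $C$ are exactly $\mathbf M(b_{jj},b_{k+j,k+j})$, where $[b_{ij}]=X^\top AX$.
\item Applying Theorem~\ref{theorem6.2} to $C$ gives $\sum_{j=1}^k\delta_j^\uparrow\le\sum_{j=1}^k(\textup{diag}_{\mathbf M}(C))^\uparrow_j$.
\item The $k$ smallest entries of a vector sum to no more than any $k$ of its entries, so this is at most $\sum_{j=1}^k\mathbf M(b_{jj},b_{k+j,k+j})$.
\end{itemize}

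\textbf{Attainment.} Use a Williamson diagonalization $W^\top AW=D\oplus D$ with $D=\textup{diag}(\delta_1^\uparrow,\dots,\delta_n^\uparrow)$. Take $X$ to be the columns $1,\dots,k$ and $n+1,\dots,n+k$ of $W$. Then $X^\top AX=\textup{diag}(\delta_1^\uparrow,\dots,\delta_k^\uparrow)^{\oplus 2}$, and by $\mathbf M(a,a)=a$ the sum equals $\sum_{j=1}^k\delta_j^\uparrow$.

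The only point needing care is the index bookkeeping: the rows of $X^\top AX$ labelled $j$ and $k+j$ must correspond to rows $j$ and $n+j$ of $C$. This holds because of the column ordering chosen in $W$. Beyond that, the corollary is immediate from the two cited theorems.
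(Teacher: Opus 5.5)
Your proposal is correct and follows the paper's route: the paper obtains the corollary by feeding Theorem~\ref{theorem6.2} into the implication (i)$\Rightarrow$(ii) of Theorem~\ref{equiv}. Your expanded lower bound (completion via Proposition~\ref{proposition3.2}, invariance via Proposition~\ref{proposition4.4}, then the bound by the $k$ smallest entries) and attainment (Williamson form with $\mathbf M(a,a)=a$) reproduce the paper's proof of that forward implication, and your pairing of row $k+j$ of $X^\top AX$ with row $n+j$ of $C$ is right (the paper's displayed infimum has the typo $b_{n+j,n+j}$ for $b_{k+j,k+j}$).
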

\section*{Declaration of Competing Interest}
There is no competing interest.

\end{document}